\newtheorem{teo}{Theorem}
\newtheorem{lema}[teo]{Lemma}
\newtheorem{prop}[teo]{Proposition}
\newtheorem{defi}[teo]{Definition}
\newtheoremstyle{mytheoremstyle} % name
    {\topsep}                    % Space above
    {\topsep}                    % Space below
    {}                   % Body font
    {}                           % Indent amount
    {\scshape}                   % Theorem head font
    {.}                          % Punctuation after theorem head
    {.5em}                       % Space after theorem head
    {}  % Theorem head spec (can be left empty, meaning ‘normal’)
\theoremstyle{mytheoremstyle} \newtheorem{nota}{Remark}
\numberwithin{equation}{section}
\newcommand{\real}{\mathbb{R}}
\newcommand{\nat}{\mathbb{N}}
\newcommand \ben {\begin{equation}}
\newcommand \een {\end{equation}}
\newcommand \be {\begin{equation*}}
\newcommand \ee {\end{equation*}}
\newcommand \bi {\begin{itemize}}
\newcommand \ei {\end{itemize}}
\date{}
\title{\textbf{Stability of ground-states for a system of \textit{M} coupled semilinear Schrödinger equations}}
\author{Simão Correia\\ \textit{CMAF-UL and FCUL, Av.\ Prof.\ Gama Pinto 2,}\\
\textit{1649-003 Lisboa, Portugal}\\
\textit{Email adress: sfcorreia@fc.ul.pt}}
\begin{document}
\maketitle

\begin{abstract}
We focus on the study of the stability properties of ground-states for the system of $M$ coupled semilinear Schrödinger equations with power-type nonlinearities and couplings. Our results are generalizations of the theory for the single equation and the technique used is a simplification of the original one. Depending on the power of the nonlinearity, we may observe stability, instability and weak instability. We also obtain results for three distinct classes of bound-states, which is a special feature of the $M\ge2$ case. 
\vskip10pt
\noindent\textbf{Keywords}: Coupled semilinear Schrödinger equations; ground-states; stability. 
\vskip10pt
\noindent\textbf{AMS Subject Classification 2010}: 35Q55, 35B35, 35B40.
\end{abstract}

\begin{section}{Introduction}
In this work, we consider the system of $M$ coupled semilinear Schrödinger equations
\ben\tag{M-NLS}
i(v_i)_t + \Delta v_i + \sum_{j=1}^M k_{ij}|v_j|^{p+1}|v_i|^{p-1}v_i=0,\quad i=1,...,M
\een
where $V=(v_1,...,v_M):\real^+\times\real^N\to\real^M$, $k_{ij}\in\real$, $k_{ij}=k_{ji}$, and $0<p<4/(N-2)^+$ (we use the convention $4/(N-2)^+=+\infty$, if $N=1,2$, and $4/(N-2)^+=4/(N-2)$, if $N\ge 3$). Given $1\le i\neq j\le M$, if $k_{ij}\ge 0$, one says that the coupling between the components $v_i$ and $v_j$ is attractive; if $k_{ij}< 0$, it is repulsive. The Cauchy problem for $V_0\in (H^1(\real^N))^M$ is locally well-posed and, letting $T_{max}(V_0)$ be the maximal time of existence of the solution with initial data $V_0$: if $T_{max}(V_0)<\infty$, then $\lim_{t\to T_{max}(V_0)}\|\nabla V(t)\|_2=+\infty$.

In the case $M=1$ and $k=1$, we obtain the nonlinear Schrödinger equation
\ben\tag{NLS}
iv_t + \Delta v + |v|^{2p}v=0.
\een

When we look for nontrivial periodic solutions of the form $V=e^{it}U$, with $U=(u_1,...,u_M)\in (H^1(\real^N))^M$ (called bound-states), we are led to the study of the system
\begin{equation}\label{BS}
 \Delta u_i - u_i + \sum_{j=1}^M k_{ij}|u_j|^{p+1}|u_i|^{p-1}u_i=0 \quad i=1,...,M.
\end{equation}

Especially relevant, for both physical and mathematical reasons, are the bound-states which have minimal action among all bound-states, the so-called ground-states. In the scalar case, one may prove that there is a unique ground-state (modulo translations and rotations).

In a recent paper (\cite{simao}), we proved the existence of ground-states of (M-NLS) under the assumption
\ben\tag{P1}
\{U\in (H^1(\real^N))^M: \sum_{i,j=1}^M k_{ij}\|u_iu_j\|_{p+1}^{p+1}>0\}\neq\emptyset.
\een
Note that this assumption is a necessary condition for existence of bound-states, since, multiplying \eqref{BS} by $U$ and integrating over $\real^N$, one obtains
\ben
\sum_{i,j=1}^M k_{ij}\|u_iu_j\|_{p+1}^{p+1}=\sum_{i=1}^M \|\nabla u_i\|_2^2 + \|u_i\|_2^2 >0.
\een
Therefore (P1) is equivalent to the existence of ground-states. To prove this, we did not use Schwarz symmetrization, since such an approach would only work if the coupling coefficients were positive. A careful application of the concentration-compactness principle turns out to be the right answer. Moreover, under fairly large conditions, we characterized the set of ground-states. More precisely, if one may group the components in such a way that two components attract each other if and only if they are in the same group, then only one of these groups is nontrivial, and it must have the same profile as the ground-state for the scalar equation.

Regarding stability, the scalar case was been treated in \cite{cazenave}, \cite{straussshatah}, \cite{grillakis}, among others. For ground-states, stability is equivalent to the condition $p<2/N$ (called the subcritical case). Note that, from the gauge and translation invariances, one should study the orbital stability of ground-states (that is, modulo rotations and translations). In \cite{cazenave}, it is possible to find examples which show that one must really consider this kind of stability.

For the general case of bound-states, the problem is much more difficult. It can be seen that, assuming non-degeneracy, the orbital stability of a bound-states is directly related with the Morse index of the action at the bound-state (see \cite{grillakis}). If this index is 1, then stability is again equivalent to the condition $p<2/N$. If the index is greater than 1, the problem remains open. Note that the assumption of non-degeneracy is not always true, even for ground-states: in \cite{simao}, we proved that, for $M=2$, $k_{11}=k_{12}=k_{22}=1$ and $p=1$, there exist a continuum of ground-states which are not related by gauge invariance. This situation, though somewhat excepcional, shows that one cannot use \textit{a priori} the results of \cite{grillakis}.

In this work, we show the analogous stability results for ground-states of (M-NLS), assuming only (P1). This was done for $M=2$ and $k_{ij}>0$ in \cite{maiamontefuscopellacci}. The framework will be very close to the scalar case as is \cite{cazenave}, though some subtle changes will be done. Specifically, to prove stability (or instability), one proves that the set of ground-states is the set of minimizers of an adequate minimization problem. This is done in two steps:
\begin{enumerate}
\item Prove that the minimization problem has a solution, independently of the existence of ground-states;
\item Using the solution found in the previous step, show the equivalence between ground-states and minimizers.
\end{enumerate}
Here, we change the argument. We shall prove directly that ground-states are minimizers and conclude the equivalence. This is more efficient, since the proof of existence of minimizers without using the ground-states and assuming only (P1) has to go through the concentration-compactness principle, which is not trivial at all (see \cite{simao}). Furthermore, we define three different classes of bound-states and prove stability results for these solutions. These are generalizations of the results obtained in \cite{maiamontefuscopellacci}.
\end{section}
\begin{section}{Definitions and main results}

Given any $U=(u_1,...,u_M)\in (H^1(\real^N))^M$, define the following functionals:
\ben
M(U):=\sum_{i=1}^M \|u_i\|_2^2,\quad T(U):=\sum_{i=1}^M \|\nabla u_i\|_2^2,\quad J(U):=\sum_{i,j=1}^M k_{ij}\|u_iu_j\|_{p+1}^{p+1},
\een
\ben
I(U):= M(U) + T(U), \quad E(U):=\frac{1}{2}T(U)-\frac{1}{2p+2}J(U),\quad H(U):=T(U) - \frac{Np}{2p+2}J(U).
\een
Finally, define the action of $U$
\ben
S(U)=\frac{1}{2}I(U)-\frac{1}{2p+2}J(U).
\een
\begin{nota}
The functional $M$ is called \textit{mass}, $T$ is the \textit{kinetic energy} and $J$ is the \textit{potential energy}. Obviously, $E$ is the total energy (or just energy). Notice that (M-NLS) may be written in a Hamiltonian way:
\ben\label{eqhamiltoniana}
iU_t=E'(U).
\een
From this, one easily observes the conservation of the $L^2$-norm of each component (and therefore of the mass) and of the energy for (M-NLS): multiply \eqref{eqhamiltoniana} by $iU$ and $U_t$, respectively, take the real part and sum in $i$ for the latter case.
\end{nota}
\begin{nota}\label{P}
Consider, for $U\in (H^1(\real^N))^M$ and $\lambda>0$, $\mathcal{P}(U,\lambda)(x)=\lambda^{\frac{N}{2}}U(\lambda x)$. By a change of variables, one sees that
\ben
M(\mathcal{P}(U,\lambda))=M(U).
\een
Now, differentiating $S(\mathcal{P}(U,\lambda))$ with respect to $\lambda$, 
\ben\label{derivadaaccao}
\frac{d}{d\lambda}S(\mathcal{P}(U,\lambda))=H(\mathcal{P}(U,\lambda)).
\een
\end{nota}
\begin{nota}
As in the scalar case, one may prove the Virial identity for (M-NLS): given $V=(v_1,...,v_M): [0,T)\to (H^1(\real^N))^M$ solution of (M-NLS), one has
\ben\label{virial}
\frac{d^2}{dt^2}\sum_{i=1}^M \|xv_i(t)\|_2^2 = 8H(V(t)).
\een
The quantity $\sum_{i=1}^M \|xv_i(t)\|_2^2$ is called the variance of $V(t)$. This identity will be essential when proving instability.
\end{nota}
\begin{defi}
We say that $U\in (H^1(\real^N))^M$ is a bound-state of (M-NLS) if it is a nonzero solution of \eqref{BS}. Furthermore, $U$ is a ground-state if $S(U)\le S(W)$, for any bound-state $W$. The set of bound-states (resp. ground-states) will be noted by $A$ (resp. G).
\end{defi}

\begin{nota}
If $U\in A$, then, multiplying \eqref{BS} by $U$ and integrating over $\real^N$, $I(U)=J(U)$. Moreover, from Pohozaev's identity,
\ben
H(U)=0.
\een
This may also be readily seen from the Virial identity.
\end{nota}

\begin{defi}
We note by $R$ the set of bound-states such that all nonzero components are equal to the same  ground-state of (NLS), up to scalar multiplication and rotation. 
\end{defi}

\begin{defi}
Fix $X\subset\{1,...,M\}$. An element $U\in A$ belongs to $G_X$ if the vector of its nonzero components is a ground-state for the (L-NLS) system for by the $i$-th components, with $i\in X$ and $L=|X|$.
\end{defi}

\begin{nota}
It is known (see \cite{cazenave}) that, up to rotations and translations, there exists a unique ground-state for (NLS), which we note by $Q$. An element in $R$ must therefore be of the form
\ben
U=(a_ie^{i\theta_i}Q(\cdot+y)),
\een
for some $a_i\ge 0$, $\theta_i\in \real$ and $y\in\real^N$.
\end{nota}

Now we present some results of \cite{simao} that wil be used later.

\begin{lema}\label{existencia}
Assume (P1). Define
\ben
\lambda_G:=\left(\inf_{J(U)=1} I(U)\right)^{\frac{p+1}{p}}>0.
\een
Then the minimization problem
\ben\label{minlambda}
I(U)=\min_{J(W)=\lambda_G} I(W),\quad J(U)=\lambda_G
\een
has a solution and $G$ is the set of its solutions. Moreover, any minimizing sequence strongly converges to an element in $G$.
\end{lema}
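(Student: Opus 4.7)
The plan has three ingredients: to show (i) the infimum value equals $\lambda_G$, (ii) the infimum is attained with strong $H^1$-convergence of minimizing sequences modulo translations, and (iii) the solution set coincides with $G$. For (i), a direct scaling computation using $W\mapsto\sigma W$ (under which $I(\sigma W)=\sigma^2 I(W)$ and $J(\sigma W)=\sigma^{2p+2}J(W)$) gives $\min_{J=\lambda}I=\lambda^{1/(p+1)}\lambda_G^{p/(p+1)}$, which equals $\lambda_G$ when $\lambda=\lambda_G$. Positivity $\lambda_G>0$ follows from Gagliardo-Nirenberg applied to $\|u_iu_j\|_{p+1}^{p+1}$.

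For (ii), I would take a minimizing sequence $V_n$ with $J(V_n)=\lambda_G$ and $I(V_n)\to\lambda_G$; it is bounded in $(H^1(\real^N))^M$, and I would apply Lions' concentration-compactness. Vanishing is excluded because, combined with H\"older, it forces $J(V_n)\to 0$, contradicting $J(V_n)=\lambda_G$. Dichotomy is excluded by the strict subadditivity of $m(\lambda):=\lambda^{1/(p+1)}\lambda_G^{p/(p+1)}$, a consequence of the strict concavity of $\lambda\mapsto\lambda^{1/(p+1)}$ for $p>0$. Compactness then yields, up to translations, strong $H^1$-convergence of a subsequence to a minimizer $V^*$.

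For (iii), $V^*$ satisfies the Euler-Lagrange equation $-\Delta v^*_\ell+v^*_\ell=\mu(p+1)\sum_j k_{\ell j}|v^*_j|^{p+1}|v^*_\ell|^{p-1}v^*_\ell$ for some $\mu\in\real$; pairing with $V^*$ and using $I(V^*)=J(V^*)=\lambda_G$ forces $\mu=1/(p+1)$, so $V^*$ solves \eqref{BS}. Any bound-state $W$ has $I(W)=J(W)$, and rescaling $W/J(W)^{1/(2p+2)}$ to $\{J=1\}$ gives $I(W)\ge\lambda_G$, hence $S(W)=\frac{p}{2p+2}I(W)\ge\frac{p}{2p+2}\lambda_G=S(V^*)$, so $V^*\in G$. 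Conversely, for $U\in G$ the same scaling bound gives $I(U)\ge\lambda_G$, while $S(U)\le S(V^*)=\frac{p}{2p+2}\lambda_G$ by the defining property of ground-states, forcing $I(U)=J(U)=\lambda_G$ and thus $U$ a minimizer. The hardest step will be ruling out dichotomy in the concentration-compactness argument: since (P1) admits negative $k_{ij}$, Schwarz symmetrization is unavailable, and one must rely entirely on the strict concavity of $\lambda\mapsto\lambda^{1/(p+1)}$ to break the splitting alternative.
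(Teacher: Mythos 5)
The paper does not actually prove Lemma \ref{existencia}; it is imported verbatim from \cite{simao}, whose strategy (as summarized in the introduction: existence of minimizers via a ``careful application of the concentration-compactness principle'', then identification with $G$ through the Lagrange multiplier and the identity $S=\frac{p}{2p+2}I$ on bound-states) is exactly the one you propose. Your scaling computation $\min_{J=\lambda}I=\lambda^{1/(p+1)}\lambda_G^{p/(p+1)}$, the multiplier normalization $\mu(p+1)=1$, and the two-sided comparison $I(W)\ge\lambda_G$ for every bound-state versus $I(U)\le\lambda_G$ for every ground-state are all correct and give part (iii) cleanly.

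The one step you state too quickly is the exclusion of dichotomy, and the difficulty is not only the unavailability of symmetrization: it is that $J$ is not sign-definite. When the sequence splits into $W_n^1+W_n^2$ with separating supports, Lions' lemma guarantees $I(W_n^i)\to\mu_i>0$ with $\mu_1+\mu_2=\lambda_G$ and $J(W_n^1)+J(W_n^2)\to\lambda_G$, but since some $k_{ij}$ may be negative the limits $\alpha_i$ of $J(W_n^i)$ need \emph{not} both lie in $(0,\lambda_G)$; one piece can carry $\alpha_1\le 0$ and the other $\alpha_2\ge\lambda_G$, and strict subadditivity of $\lambda\mapsto\lambda^{1/(p+1)}$ says nothing about that configuration. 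The fix is an explicit case split: if both $\alpha_i>0$, then $\lambda_G=\mu_1+\mu_2\ge m(\alpha_1)+m(\alpha_2)>m(\lambda_G)=\lambda_G$, a contradiction; if, say, $\alpha_1\le 0$, then $\alpha_2\ge\lambda_G$, the monotonicity of $m$ on $(0,\infty)$ gives $\mu_2\ge m(\alpha_2)\ge\lambda_G$, hence $\mu_1\le 0$, contradicting $\mu_1>0$. With that case analysis added (and the convergence understood modulo translations and subsequences, as the constraint set is translation-invariant), your plan is complete and coincides with the cited proof.
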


\begin{lema}\label{caracterizacao}
Suppose (P1) and that there exists a partition $\{Y_k\}_{1\le k\le K}$ of $\{1,...,M\}$ such that, given $1\le i\neq j\le M$,
\ben
k_{ij} \ge 0 \mbox{ if and only if } \exists k: i,j\in Y_k.
\een
Then, if $U^0=(u_1^0,...,u^0_M)\in G$, there exists $k\in\{1,...,K\}$ such that $u^0_i=0, \forall i\notin Y_{k}$ and  $G\subset R$.
\end{lema}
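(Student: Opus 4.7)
The plan is to use Lemma \ref{existencia}, which identifies $G$ with the set of minimizers of $I$ subject to $J(W)=\lambda_G$, together with the sign structure of the couplings. The argument splits into two stages: first, localize $U^0$ to a single group $Y_{k^*}$; second, identify each nonzero component as a translated complex multiple of the scalar ground state $Q$.

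For the first stage, write $U^{(k)}$ for the vector obtained from $U^0$ by zeroing the components outside $Y_k$. Since cross-group couplings are strictly negative and within-group couplings are nonnegative, $J(U^0)\leq \sum_k J(U^{(k)})$ with $J(U^{(k)})\geq 0$ for every $k$. Lemma \ref{existencia}, after the scaling $W\mapsto \alpha W$ that restores $J=\lambda_G$, is equivalent to the estimate $I(W)\geq \lambda_G^{p/(p+1)} J(W)^{1/(p+1)}$ for every $W$ with $J(W)>0$. Summing over $k$ and invoking the strict subadditivity of $x\mapsto x^{1/(p+1)}$ on $[0,\infty)$ gives
\begin{equation*}
I(U^0) = \sum_k I(U^{(k)}) \geq \lambda_G^{p/(p+1)}\sum_k J(U^{(k)})^{1/(p+1)} \geq \lambda_G^{p/(p+1)}\Bigl(\sum_k J(U^{(k)})\Bigr)^{1/(p+1)} \geq \lambda_G.
\end{equation*}
Since $I(U^0)=\lambda_G$, every step is an equality: subadditivity forces a unique $k^*$ with $J(U^{(k^*)})>0$, the cross-group products $u_i^0 u_j^0$ all vanish, and $I(U^{(k)})=0$ for $k\neq k^*$, whence $U^{(k)}\equiv 0$ for $k\neq k^*$.

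For the second stage, $U^0$ is now supported in $Y_{k^*}$, where all $k_{ij}\geq 0$. Combine the Cauchy--Schwarz inequality
\begin{equation*}
\|u_i^0 u_j^0\|_{p+1}^{p+1}\leq \|u_i^0\|_{2p+2}^{p+1}\|u_j^0\|_{2p+2}^{p+1} \quad \text{(equality iff } |u_i^0|\propto |u_j^0|\text{)},
\end{equation*}
with the sharp scalar ground-state inequality $\|v\|_{H^1}^2\geq \mu\|v\|_{2p+2}^2$, where $\mu>0$ is attained precisely on $\{c\,Q(\cdot-y):c\in\mathbb{C},\,y\in\real^N\}$. Setting $c_i:=\|u_i^0\|_{2p+2}^{p+1}$ yields
\begin{equation*}
I(U^0)\geq \mu\sum_{i\in Y_{k^*}} c_i^{2/(p+1)},\qquad \sum_{i,j\in Y_{k^*}} k_{ij}c_ic_j\geq \lambda_G.
\end{equation*}
Minimize $\sum c_i^{2/(p+1)}$ under this algebraic constraint; by coercivity and continuity the infimum is attained at some $(c_i^*)$, and setting $c_i^*=(a_i^*)^{p+1}\|Q\|_{2p+2}^{p+1}$ produces an ansatz $U^*=(a_i^* Q)_{i\in Y_{k^*}}$ that is admissible for the minimization $J=\lambda_G$ with $I(U^*)=\mu\sum_i(c_i^*)^{2/(p+1)}$. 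Minimality of $U^0$ gives $I(U^0)\leq I(U^*)$, matching the reverse bound; hence equality holds in both sharp estimates for every nonzero component, forcing $u_i^0=a_i\,e^{i\theta_i}Q(\cdot-y)$ with a common translation $y$, which is exactly the form of elements of $R$.

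The principal obstacle is the rigidity of the second stage: one must package the two sharp inequalities into an algebraic lower bound that is matched on the nose by the ansatz $U^*$, verify the existence of a minimizer $(c_i^*)$ for the finite-dimensional problem in $\real^L$, and, most delicately, extract a common translation $y$ by combining the Cauchy--Schwarz equality case (proportional moduli of the components) with the rigid characterization of equality in the scalar ground-state inequality (uniqueness of $Q$ modulo translation, phase, and positive scaling).
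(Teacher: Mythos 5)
First, note that the paper does not prove this lemma at all: it is quoted verbatim from \cite{simao} in the block ``results of \cite{simao} that will be used later,'' so there is no in-paper proof to compare against and your argument must stand on its own. Your overall strategy is sensible and the first stage is essentially sound: decomposing $U^0$ into the group-supported pieces $U^{(k)}$, using negativity of cross-group couplings to get $J(U^0)\le\sum_k J(U^{(k)})$, and combining $I(W)\ge\lambda_G^{p/(p+1)}J(W)^{1/(p+1)}$ with strict subadditivity of $x\mapsto x^{1/(p+1)}$ does localize the support to one $Y_{k^*}$. One small repair is needed there: the hypothesis constrains only the off-diagonal signs, so $J(U^{(k)})$ need not be nonnegative (the diagonal coefficients $k_{ii}$ are unrestricted), and you should run the subadditivity step only over the indices with $J(U^{(k)})>0$, discarding the others, which still closes the chain of inequalities.

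The genuine gap is in the second stage, precisely at the point you yourself flag as ``the principal obstacle'': extracting a \emph{common} translation $y$. Equality in your Cauchy--Schwarz step is forced only for pairs $(i,j)$ with $k_{ij}>0$; within $Y_{k^*}$ the hypothesis guarantees only $k_{ij}\ge 0$, so some off-diagonal couplings may vanish. Consequently you obtain $u_i^0=b_i Q(\cdot-y_i)$ with $y_i=y_j$ only when $i$ and $j$ are joined by a \emph{strictly} positive coupling, and if the support of $U^0$ splits into two blocks linked by zero couplings the argument as written does not produce a single $y$, hence does not place $U^0$ in $R$. This can be fixed, but it requires an additional argument you have not supplied: e.g., show that any minimizer $(c_i^*)$ of your finite-dimensional problem must have support inducing a connected ``positivity graph.'' This follows from the same concavity mechanism as stage one (if the support splits as $S_1\sqcup S_2$ with no positive coupling between them, the constraint quadratic form and the objective both decouple, the objective scales like $t^{1/(p+1)}$ in the share $t$ of the constraint allotted to each block, and strict concavity pushes the minimum to a vertex where one block vanishes). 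Since equality in your chain forces $(c_i)$ itself to be such a minimizer, this closes the argument; without it the proof is incomplete. A second, lighter point to make explicit is why the finite-dimensional minimizer saturates the constraint $\sum k_{ij}c_ic_j=\lambda_G$ (scale down by $t<1$ otherwise), which you use implicitly when declaring $U^*$ admissible for \eqref{minlambda}.
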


\begin{nota}
Without the hypothesis in the above lemma, there may exist situations where $R$ is empty.
\end{nota}

\begin{lema}\label{gagliardo}
The optimal constant for the vector-valued Gagliardo-Nirenberg inequality
\ben\label{GN}
J(W)\le CM(W)^{p+1-\frac{Np}{2}}T(W)^{\frac{Np}{2}},\ W\in (H^1(\real^N))^M
\een
is
\ben
C_M=\frac{J(\mathcal{Q})}{M(\mathcal{Q})^{p+1-\frac{Np}{2}}T(\mathcal{Q})^{\frac{Np}{2}}},\ \mathcal{Q}\in G.
\een
Moreover, one has equality if and only if
\ben
\nu W(\zeta x) \in G,
\een
where
\ben\label{defmu}
\nu = \left(\frac{J(\mathcal{Q})M(W)}{M(\mathcal{Q})J(W)}\right)^{\frac{1}{2p}}
\een
and
\ben\label{defzeta}
\zeta = \left(\nu^2\left(\frac{M(W)}{M(\mathcal{Q})}\right)\right)^{\frac{1}{N}}.
\een

\end{lema}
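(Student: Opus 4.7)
The plan is to exploit the two-parameter group of scalings under which the ratio
\be
F(W):=\frac{J(W)}{M(W)^{p+1-\frac{Np}{2}}T(W)^{\frac{Np}{2}}}
\ee
is invariant, and use this freedom to reduce \eqref{GN} to the constrained minimization problem characterised in Lemma~\ref{existencia}. A direct computation of the exponents shows that both rescalings $W\mapsto \mu W$ and $W\mapsto W(\lambda\,\cdot)$, with $\mu,\lambda>0$, leave $F$ unchanged: under $W_{\mu,\lambda}(x):=\mu W(\lambda x)$ one has
\be
J(W_{\mu,\lambda})=\mu^{2(p+1)}\lambda^{-N}J(W),\quad M(W_{\mu,\lambda})=\mu^2\lambda^{-N}M(W),\quad T(W_{\mu,\lambda})=\mu^2\lambda^{2-N}T(W),
\ee
and the chosen exponents on $M$ and $T$ are precisely the ones that cancel the $\mu$ and $\lambda$ dependence.

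Assume, without loss of generality, that $J(W)>0$ (otherwise \eqref{GN} is trivial). Fix a ground-state $\mathcal{Q}\in G$ and solve the system
\be
J(W_{\mu,\lambda})=\lambda_G=J(\mathcal{Q}),\qquad M(W_{\mu,\lambda})=M(\mathcal{Q})
\ee
for $\mu,\lambda>0$. Dividing the two equations eliminates $\lambda$ and gives $\mu^{2p}=J(\mathcal{Q})M(W)/(M(\mathcal{Q})J(W))$, which is exactly \eqref{defmu}; plugging back yields $\lambda^N=\mu^2 M(W)/M(\mathcal{Q})$, reproducing \eqref{defzeta}. Thus for every admissible $W$ there is a unique pair $(\mu,\lambda)=(\nu,\zeta)$ normalising $W_{\mu,\lambda}$ to the constraint set of Lemma~\ref{existencia}.

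By Lemma~\ref{existencia}, the normalised function satisfies $I(W_{\nu,\zeta})\ge I(\mathcal{Q})$. Since $M(W_{\nu,\zeta})=M(\mathcal{Q})$, this amounts to $T(W_{\nu,\zeta})\ge T(\mathcal{Q})$. Using the scaling invariance of $F$,
\be
F(W)=F(W_{\nu,\zeta})=\frac{J(\mathcal{Q})}{M(\mathcal{Q})^{p+1-\frac{Np}{2}}T(W_{\nu,\zeta})^{\frac{Np}{2}}}\le \frac{J(\mathcal{Q})}{M(\mathcal{Q})^{p+1-\frac{Np}{2}}T(\mathcal{Q})^{\frac{Np}{2}}}=C_M,
\ee
which is \eqref{GN}. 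For the equality case, $F(W)=C_M$ forces $T(W_{\nu,\zeta})=T(\mathcal{Q})$, hence $I(W_{\nu,\zeta})=I(\mathcal{Q})$, and the second half of Lemma~\ref{existencia} (any minimising sequence strongly converges to a ground-state, in particular minimisers are ground-states) delivers $W_{\nu,\zeta}=\nu W(\zeta\,\cdot)\in G$; conversely, if this holds then the scaling invariance of $F$ immediately produces equality.

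No serious obstacle is expected: the bulk of the analysis is absorbed into Lemma~\ref{existencia}, and the remaining task is the algebraic bookkeeping to identify the scaling parameters with $\nu$ and $\zeta$. The only care needed is to verify that the system defining $(\nu,\zeta)$ is uniquely solvable for any $W$ with $J(W)>0$, which follows from the explicit positivity of $M(W)$, $M(\mathcal{Q})$, $J(W)$, $J(\mathcal{Q})$ and the fact that the exponents $2$ and $2p$ give strictly monotone relations.
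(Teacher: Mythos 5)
Your proof is correct: the scaling exponents, the identification of $(\nu,\zeta)$ as the unique normalisation onto the constraint set $\{J=\lambda_G=J(\mathcal{Q})\}$ with $M=M(\mathcal{Q})$, and the reduction of both the inequality and its equality case to the variational characterisation of $G$ in Lemma~\ref{existencia} all check out (the only implicit point, that $F$ takes the same value $C_M$ on all of $G$, follows since every $\mathcal{Q}\in G$ has the same $I$, $J=\lambda_G$ and, via $H(\mathcal{Q})=0$, the same $M$ and $T$). The paper itself gives no proof of this lemma --- it is imported from \cite{simao} --- but your argument is exactly the standard Weinstein-type derivation one would expect there, so there is nothing substantive to contrast.
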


When searching for ground-states for (NLS), one may adopt two strategies: the first is the one presented in lemma \ref{existencia}; the second is to minimize the energy, fixing the mass equal to some constant. Then, using a suitable scaling determined by the associated Lagrange multiplier, one obtains a ground-state. For a precise value of this constant, the multiplier is $1$ and so minimizers are ground-states. Note that this only works if $p<2/N$.

We can try to adopt a similar strategy for the (M-NLS) system, for $p<2/N$. There are two ways of extending such a procedure:
\bi
\item Minimize the energy, fixing the total mass equal to some constant. We show (lemma \ref{equivalenciasubcritico}) that this is equivalent to the minimization problem \eqref{minlambda};
\item Minimize the energy, fixing the mass of \textit{each} component equal to some positive constant. More precisely, given $c>0$, consider the minimization problem
\ben\label{minenergiamassadecadaumafixa}
E(U)=\min_{\{W:\|w_i\|_2^2=c\}} E(W),\quad \|u_i\|_2^2=c\ \forall i.
\een
However, it is not necessary that one even obtains bound-states, since there will exist $M$ Lagrange multipliers which may be different, and so it is not possible to make a scaling to obtain a ground-state (notice that the minimizers will correspond to periodic solutions of the form $U=(e^{i\omega_it}u_i)_{1\le i\le M}$. If $\omega_i\neq\omega_j$, the corresponding components will be out of phase). 
\ei

\begin{defi}
We define $B^c$ to be the set of minimizers of \eqref{minenergiamassadecadaumafixa} that belong to $A$. For $X\in\{1,...,M\}$, we define $B^c_X$ to be the set of elements for which the vector of its nonzero components is in $B^c$ for the (L-NLS) system formed by the $i$-th components, with $i\in X$ and $L=|X|$.
\end{defi}

\begin{defi}
Let $\mathcal{S}\subset (H^1(\real^N))^M$ be invariant by the flow generated by (M-NLS).
We say that $\mathcal{S}$ is:
\begin{enumerate}
\item stable if, for each $\delta>0$, there exists an $\epsilon>0$ such that, for any $V_0\in (H^1(\real^N))^M$ with
\ben
\inf_{W\in\mathcal{S}} \|V_0-W\|_{H^1(\real^N)^M}<\epsilon,
\een
the solution $V$ of (M-NLS) with initial data $V_0$ satisfies
\ben
\inf_{W\in\mathcal{S}} \|V(t)-W\|_{H^1(\real^N)^M}<\delta, \forall t<T_{max}(V_0).
\een
\item weakly unstable if there exist $\epsilon>0$ and a sequence $V_n^0$ such that
\ben
\inf_{U\in \mathcal{S}} \|V_n^0-U\|_{H^1(\real^N)^M} \to 0, \ n\to\infty
\een
and, letting $V_n$ be the solution of (M-NLS) with initial data $V_n^0$,
\ben
\sup_{t\in [0,T_{max}(V_n^0))} \inf_{U\in \mathcal{S}} \|V_n(t)-U\|_{H^1(\real^N)^M}>\epsilon.
\een
\item unstable if, for any $U\in \mathcal{S}$, there exists a sequence $U_n\to U$ such that $T_{max}(U_n)<\infty$, for any $n\in\nat$.
\end{enumerate}
\end{defi}

Next, we present the main results of this paper:
\begin{teo}\label{stability}
Assume (P1) and $p<2/N$. For any $X\subset \{1,...,M\}$, let $G_1\subset G_X$ be such that $\mbox{dist}(G_1, G_X\setminus G_1)>\delta$, for some $\delta>0$. Then $G_1$ is stable.
\end{teo}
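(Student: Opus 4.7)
The strategy is a contradiction argument adapted from Cazenave's scalar treatment in \cite{cazenave}. Assume $G_1$ is not stable: there exist $\epsilon_0>0$, a sequence $V_n^0$ with $\inf_{W\in G_1}\|V_n^0-W\|_{H^1(\real^N)^M}\to 0$, and times $t_n\in[0,T_{max}(V_n^0))$ such that $\inf_{W\in G_1}\|V_n(t_n)-W\|_{H^1(\real^N)^M}\ge\epsilon_0$. Pick $U_n\in G_1$ with $\|V_n^0-U_n\|_{H^1(\real^N)^M}\to 0$; since $G_X$ is relatively compact modulo translations (Lemma~\ref{existencia} applied to the (L-NLS) subsystem indexed by $X$, with $L=|X|$), one may assume, after translating, $U_n\to U^*\in G_1$ strongly. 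Each component mass $\|v_n^i\|_2^2$ is conserved by (M-NLS) (the argument for total $L^2$-conservation in Remark 1 applies equation by equation), so $\|v_n^i(t_n)\|_2\to\|u_i^*\|_2$; in particular $\|v_n^i(t_n)\|_2\to 0$ for $i\notin X$, since $U^*\in G_X$. Conservation of $E$ together with the Gagliardo-Nirenberg inequality of Lemma~\ref{gagliardo}, which in the subcritical case $p<2/N$ makes $E$ coercive in $T$ at bounded total mass, furnishes a uniform $(H^1)^M$-bound on $V_n(t_n)$.

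The core is to prove that, along a subsequence and after spatial translations, $V_n(t_n)\to W^*\in G_X$ strongly in $(H^1(\real^N))^M$. Interpolating $\|v_n^i(t_n)\|_2\to 0$ for $i\notin X$ against the uniform $H^1$-bound gives $\|v_n^i(t_n)\|_{2p+2}\to 0$ for these indices, and every term of $J(V_n(t_n))$ involving an index outside $X$ is therefore $o(1)$. Writing $E_L$, $J_L$, $T_L$, $M_L$ for the functionals of the (L-NLS) subsystem indexed by $X$, this yields
\[
E(V_n(t_n))=E_L(V_n(t_n)|_X)+\tfrac12 T(V_n(t_n)|_{X^c})+o(1).
\]
The subcritical equivalence between the action-minimization problem of Lemma~\ref{existencia} and the minimization of $E_L$ at fixed total mass (Lemma~\ref{equivalenciasubcritico} applied to the (L-NLS) subsystem), together with continuity of the (L-NLS) ground-state energy as a function of the mass parameter (via the multiplicative rescaling $W\mapsto\alpha W$), gives $E_L(V_n(t_n)|_X)\ge E_L(U^*|_X)+o(1)$. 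Combined with the display above and $T(V_n(t_n)|_{X^c})\ge 0$, this forces simultaneously $T(V_n(t_n)|_{X^c})\to 0$ and $E_L(V_n(t_n)|_X)\to E_L(U^*|_X)$. After a trivial multiplicative rescaling matching the total $X$-mass, $V_n(t_n)|_X$ is a minimizing sequence for the (L-NLS) ground-state problem, and Lemma~\ref{existencia} applied to the subsystem produces a strong limit $\hat W\in G_X^L$. The $H^1$-vanishing of the $X^c$-components then yields $W^*\in G_X$ with $V_n(t_n)\to W^*$ strongly.

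The isolation hypothesis closes the argument. Either $W^*\in G_1$, which contradicts $\inf_{W\in G_1}\|V_n(t_n)-W\|_{H^1(\real^N)^M}\ge\epsilon_0$, or $W^*\in G_X\setminus G_1$. In the latter case, the $(H^1)^M$-continuity of $t\mapsto V_n(t)$ forces, for large $n$, an intermediate time $s_n\in(0,t_n)$ at which $V_n(s_n)$ lies at distance at least $\delta/2$ from both $G_1$ and $G_X\setminus G_1$, hence from $G_X$ itself; running the previous paragraph again at $s_n$ produces a strong limit in $G_X$, contradicting this lower bound.

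The main obstacle is the second paragraph: one must produce a genuine minimizing sequence for the (L-NLS) ground-state problem out of $V_n(t_n)|_X$ despite (i) the drift of its total mass from $M_L(U^*|_X)$ and (ii) the uncontrolled kinetic contribution of the $X^c$-components. Continuity of the ground-state energy in the mass parameter addresses (i), while the non-negativity of $T(V_n(t_n)|_{X^c})$ combined with the energy balance addresses (ii); the concentration-compactness conclusion of Lemma~\ref{existencia}, applied to the subsystem, then yields the strong limit in $G_X$.
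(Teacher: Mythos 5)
Your proposal is correct and follows essentially the same route as the paper: negate stability, use conservation of (component) mass and energy to turn the rescaled $V_n(t_n)$ into a minimizing sequence for the fixed-mass energy problem of Lemma~\ref{equivalenciasubcritico}, invoke the strong compactness of minimizing sequences from Lemma~\ref{existencia} applied to the (L-NLS) subsystem, and contradict the separation of $G_1$ from $G_X\setminus G_1$. The only differences are cosmetic — you handle the separation via an intermediate time $s_n$ where the paper normalizes $t_n$ to be the first exit time with $\epsilon<\delta$, and you make explicit the energy-balance argument forcing $T(V_n(t_n)|_{X^c})\to 0$, a detail the paper leaves implicit in its treatment of general $X$.
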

\begin{nota}
In many cases, the set $G$ is discrete modulo translations and rotations. Then any connected component of $G$ is stable. Since these components are obtained by translations and rotations of a given element, one obtains orbital stability of ground-states.
\end{nota}

\begin{prop}\label{stabilityB}
Suppose (P1), $p<2/N$, $k_{ij}>0, i\neq j$ and that there exists $\beta>0$ such that
\ben
\quad \sum_{j=1}^M k_{ij} = \beta, \ \forall i.
\een
Then, for $c=\|\beta^{\frac{1}{2p}}Q\|_2^2$,
\ben
B^c=\{(e^{i\theta_i}\beta^{\frac{1}{2p}}Q(\cdot+y))_{1\le i\le M}: \theta_i\in\real, y\in\real^N\}\subset R
\een
and, given $X\subset\{1,...,M\}$, $B^c_X$ is stable. 
\end{prop}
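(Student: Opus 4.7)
The plan is to characterize $B^c$ explicitly by means of the sharp Gagliardo-Nirenberg inequality of Lemma \ref{gagliardo}, and then deduce stability of $B^c_X$ from the standard compactness-conservation template that underlies Theorem \ref{stability}.

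For the characterization, any $U$ with $\|u_i\|_2^2 = c$ for all $i$ satisfies $M(U)=Mc$, so Lemma \ref{gagliardo} yields the lower bound
\[ E(U) \;\ge\; \tfrac{1}{2}T(U) - \tfrac{C_M}{2p+2}(Mc)^{p+1-\frac{Np}{2}}\, T(U)^{\frac{Np}{2}}. \]
Since $p<2/N$, the right-hand side is a smooth function of $T(U)\in[0,\infty)$ with a unique positive minimizer $t_*$ and value $E_*$; equality $E(U)=E_*$ is equivalent to both equality holding in Gagliardo-Nirenberg and $T(U)=t_*$. The hypothesis $k_{ij}>0$ for $i\ne j$ allows Lemma \ref{caracterizacao} to be applied with the trivial partition, giving $G\subset R$. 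Substituting an element of the listed set into \eqref{BS}, using $\Delta Q - Q=-Q^{2p+1}$ and $\sum_j k_{ij}=\beta$, verifies directly that it is a bound-state; its component masses are $c$, a short computation gives $T=t_*$, and the equality clause of Lemma \ref{gagliardo} applied with a ground-state $\mathcal{Q}\in G$ of the same form confirms equality in Gagliardo-Nirenberg. Thus the listed set lies in $B^c$.

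Conversely, let $U\in B^c$. It attains $E_*$, so equality holds in Gagliardo-Nirenberg, and Lemma \ref{gagliardo} provides $\nu,\zeta>0$ with $\nu U(\zeta\cdot)\in G\subset R$. Writing $\nu u_i(\zeta\cdot) = a_i'e^{i\theta_i'}Q(\cdot+y')$, the individual mass constraint $\|u_i\|_2^2=c$ forces all $a_i'$ to coincide, and the bound-state equation for (M-NLS) applied to $U$ forces $\zeta=1$ together with the common value of $a_i'$ prescribed by the statement. This yields the displayed identity for $B^c$ and the inclusion $B^c\subset R$.

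For stability of $B^c_X$, the main observation is that each individual mass $\|v_i(t)\|_2^2$ is conserved by (M-NLS), as a direct computation shows $\mathrm{Im}\int \bar{v_i}(\Delta v_i+\sum_j k_{ij}|v_j|^{p+1}|v_i|^{p-1}v_i)\,dx=0$ (the nonlinear contribution has real integrand). Given a sequence $V_n^0\to U\in B^c_X$ in $(H^1)^M$, conservation of the individual masses and of $E$ forces $V_n(t)$ to be an almost-minimizing sequence for \eqref{minenergiamassadecadaumafixa} for all $t$; strong compactness of such sequences (up to translations and phase rotations), obtained from the characterization above together with the concentration-compactness argument of \cite{simao}, then yields $\mathrm{dist}(V_n(t),B^c_X)\to 0$ uniformly in $t$. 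The main obstacle, as I see it, is controlling the small $L^2$-mass that a perturbation may deposit into components outside $X$: this mass is preserved by the flow, so one must verify that almost-minimizers with a bit of off-$X$ mass still converge to $B^c_X$; continuity of the Gagliardo-Nirenberg lower bound in the individual masses, combined with the rigidity of Lemma \ref{caracterizacao}, forces the limit to have its support on $X$.
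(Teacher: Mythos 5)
Your characterization step contains a genuine gap: the vector Gagliardo--Nirenberg inequality is the wrong rigidity tool for this problem. Your lower bound
\[
E(U)\ \ge\ \tfrac{1}{2}T(U)-\tfrac{C_M}{2p+2}(Mc)^{p+1-\frac{Np}{2}}T(U)^{\frac{Np}{2}},
\]
minimized over $T$, is exactly $\min\{E(W):M(W)=Mc\}$, and by Lemma \ref{gagliardo} equality forces $\nu U(\zeta\,\cdot)\in G$. But under the stated hypotheses Lemma \ref{caracterizacao} only tells you $G\subset R$, i.e.\ a ground-state has the form $(a_ie^{i\theta_i}Q(\cdot+y))$ with the amplitudes solving $\sum_j k_{ij}a_j^{p+1}a_i^{p-1}=1$ and minimizing $\sum_i a_i^2$; the symmetric choice $a_i\equiv\beta^{-1/(2p)}$ is just one solution of this algebraic system and is in general \emph{not} the minimizing one (e.g.\ for $k_{ij}\equiv\beta/M$ and $p>1$ the single-component bound-state has strictly smaller action). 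So the elements you want to place in $B^c$ are typically not rescaled ground-states, they do not achieve equality in Gagliardo--Nirenberg, and the infimum of $E$ over the individual-mass constraint set is strictly larger than your $E_*$. Both directions of your argument collapse: the forward direction because "equality in GN" fails for the listed set, and the converse because a minimizer of \eqref{minenergiamassadecadaumafixa} need not attain $E_*$ at all. This is precisely why the paper remarks that Proposition \ref{stabilityB} produces \emph{stable bound-states that are not ground-states}.

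The constraint that each component separately has mass $c$ must enter the lower bound, and the paper does this by a different decoupling: the pointwise Young inequality $|u_i|^{p+1}|u_j|^{p+1}\le\frac12|u_i|^{2p+2}+\frac12|u_j|^{2p+2}$ combined with the row-sum condition $\sum_jk_{ij}=\beta$ gives $J(U)\le\beta\sum_i\|u_i\|_{2p+2}^{2p+2}$, hence $E(U)\ge\sum_iE_1(u_i)$ with $E_1(u)=\frac12\|\nabla u\|_2^2-\frac{\beta}{2p+2}\|u\|_{2p+2}^{2p+2}$; each scalar term is then minimized under its own mass constraint by Lemma \ref{equivalenciasubcritico} with $M=1$, and the equality case of Young forces all translation parameters $y_i$ to coincide. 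The same decoupling repairs your stability argument: instead of invoking the concentration-compactness machinery of \cite{simao} (which is set up for the ground-state problems, not for \eqref{minenergiamassadecadaumafixa}), an almost-minimizing sequence satisfies $E_1(w_n^i)\to\min E_1$ for every $i$, so scalar orbital stability applies componentwise and the Young equality argument synchronizes the limits. Your observation that the individual masses are conserved, and your treatment of the residual mass outside $X$, are fine and match the paper.
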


\begin{nota}
The above results show the existence of stable bound-states that are not ground-states.
\end{nota}

\begin{teo}\label{stronginstability}
Assume (P1) and $p>2/N$. Then $G$ and $R$ are unstable.
\end{teo}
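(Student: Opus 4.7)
The strategy is the Berestycki--Cazenave scheme based on the $L^{2}$-preserving scaling $\mathcal{P}(U,\lambda)$ of Remark~\ref{P} and the virial identity~\eqref{virial}. The supercritical assumption $p>2/N$ makes $Np>2$, which simultaneously gives $H(U_{\lambda})<0$ when one scales a bound state and the concavity that will drive the variance negative. I would handle $G$ by a variational invariance argument on a fixed mass slice, and handle $R$ by reducing directly to the scalar instability of $Q$.

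For $U\in G$, consider $U_{n}:=\mathcal{P}(U,\lambda_{n})$ with $\lambda_{n}\searrow 1$. Since $H(U)=0$, a direct computation yields $H(U_{n})=(\lambda_{n}^{2}-\lambda_{n}^{Np})T(U)<0$ and $E(U_{n})=T(U)(\lambda_{n}^{2}/2-\lambda_{n}^{Np}/Np)<E(U)$, while $M(U_{n})=M(U)$ and $U_{n}\to U$ in $(H^{1}(\real^{N}))^{M}$. Standard exponential decay of solutions of~\eqref{BS} gives $U_{n}\in\Sigma$, so~\eqref{virial} applies. The core task is the invariance of
\[
\mathcal{K}:=\{W\in\Sigma:\ M(W)=M(U),\ E(W)<E(U),\ H(W)<0\}
\]
under the (M-NLS) flow. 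Conservation of mass and energy handles two of the three conditions; for the sign of $H$, I would use that every element of $G$ shares the common mass $M_{G}=\lambda_{G}(2p+2-Np)/(2p+2)$ (obtained from $I=J$ together with $H=0$), and then combine Lemma~\ref{existencia} with the sharp Gagliardo--Nirenberg estimate of Lemma~\ref{gagliardo} to show that the infimum of $E$ over the Pohozaev slice $\{H=0\}\cap\{M=M_{G}\}$ is attained exactly on $G$ with value $E(U)$. Thus a crossing $H(V(t_{0}))=0$ would force $E(V(t_{0}))\ge E(U)$, contradicting $E(V(t_{0}))=E(U_{n})<E(U)$. A quantitative version of the same minimization upgrades this to $H(V(t))\le -c<0$ uniformly in $t$, after which the virial identity drives the variance below zero in finite time, so $T_{\max}(U_{n})<\infty$.

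For $U\in R$, write $u_{i}=a_{i}e^{i\theta_{i}}Q(\cdot+y)$ with $(a_{i})$ satisfying the algebraic relation $a_{i}=a_{i}^{p}\sum_{j}k_{ij}a_{j}^{p+1}$, which is automatic since $U\in A$. Substituting the ansatz $\tilde V(t,x):=(a_{i}e^{i\theta_{i}}\tilde Q(t,x+y))_{i}$ into (M-NLS) and using this algebraic relation shows that $\tilde V$ solves (M-NLS) on the whole maximal interval of any scalar (NLS) solution $\tilde Q$. The classical Berestycki--Cazenave instability of the scalar ground state $Q$ for $p>2/N$ supplies $\tilde Q_{n}\to Q$ in $H^{1}(\real^{N})$ whose scalar evolutions blow up in finite time; the corresponding vector data $\tilde V_{n}(0)$ then converge to $U$ in $(H^{1}(\real^{N}))^{M}$ and, since some $a_{i}\neq 0$, inherit the blow-up via $\|\nabla\tilde V_{n}(t)\|_{2}\ge|a_{i}|\,\|\nabla\tilde Q_{n}(t)\|_{2}\to\infty$ at the scalar blow-up time.

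The main obstacle is the quantitative lower bound $-H(V(t))\ge c>0$ in the $G$ step: mere propagation of $H<0$ is not sufficient to close the virial argument, and extracting a uniform gap from $E(U)-E(U_{n})>0$ relies crucially on the sharp equality characterization provided by Lemma~\ref{gagliardo}, together with a continuity argument on the Pohozaev slice of fixed mass $M_{G}$.
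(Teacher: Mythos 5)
Your treatment of $R$ coincides with the paper's: transfer the scalar blow-up instability of $Q$ to the vector problem through the ansatz $(a_ie^{i\theta_i}\tilde Q)_i$, and the algebraic compatibility relation $a_i=a_i^p\sum_j k_{ij}a_j^{p+1}$ you write down is exactly what makes this ansatz solve (M-NLS); that half is complete. For $G$, your overall scheme (perturb by $\mathcal{P}(\cdot,\lambda)$ with $\lambda>1$, propagate $H<0$, close with the virial identity) is also the paper's, and your computations of $H(U_n)$, $E(U_n)$ and of the common ground-state mass are correct, as is the claim that $G$ realizes the infimum of $E$ on $\{H=0\}\cap\{M=M_G\}$. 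The problem is the step you yourself flag as ``the main obstacle'': the uniform bound $H(V(t))\le -c<0$. Mere non-vanishing of $H$ plus conservation laws does not yield it, and your proposed upgrade (``a quantitative version of the same minimization'' via sharp Gagliardo--Nirenberg and a continuity argument) is announced rather than carried out. Since the entire virial conclusion rests on this bound, the $G$ half of the proposal is incomplete as written.

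The paper closes this gap with a short concavity argument worth knowing: if $H(W)<0$, the function $g(\lambda)=S(\mathcal{P}(W,\lambda))$ attains its maximum at some $\lambda_0<1$ and is concave on $[\lambda_0,1]$, whence
\[
S(W)\ \ge\ S(\mathcal{P}(W,\lambda_0))+(1-\lambda_0)H(W)\ \ge\ S(\mathcal{Q})+H(W),
\]
using $H(\mathcal{P}(W,\lambda_0))=0$ and the fact that $\mathcal{Q}\in G$ minimizes $S$ over $\{H=0\}$ (a separate lemma in the paper). Since $S=\tfrac12 M+E$ is conserved, this gives immediately $H(V_\lambda(t))\le S(\mathcal{Q}_\lambda)-S(\mathcal{Q})=-\delta$ wherever $H(V_\lambda(t))<0$, hence everywhere by continuity; no sharp constant, fixed-mass slice, or limiting argument is required. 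Your alternative route can in principle be made rigorous (if $H(V(t_k))\to 0^-$, express $T$ and $J$ through $E$ and $H$, pass to the limit in the sharp inequality of Lemma~\ref{gagliardo} at mass $M_G$, and deduce $E(V)\ge E(\mathcal{Q})$, a contradiction), but until that chain is actually written out the argument does not close. One small point in your favour: you justify that $U_n$ has finite variance via the decay of bound states, a hypothesis the paper uses tacitly when invoking \eqref{virial}.
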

\begin{nota}
Under the assumptions of lemma \ref{caracterizacao}, it is sufficient to prove that $R$ is unstable, and this follows from the instability of the ground-states for (NLS).
\end{nota}
\begin{teo}\label{weakinstability}
Assume (P1) and $p>2/N$. If $U\in A$ is a local minimum of $S$ over the set
\ben
\mathcal{H}:=\{W:\ H(W)=0\},
\een
then the set $\{e^{i\theta}U(\cdot + y): \theta\in\real,\ y\in\real^N\}$, is weakly unstable.
\end{teo}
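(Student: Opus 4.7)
The plan is to combine a scaling perturbation with the Virial identity \eqref{virial} and the local-minimum hypothesis on $\mathcal{H}$. First, I would construct perturbations $U_\lambda:=\mathcal{P}(U,\lambda)$ for $\lambda>1$ close to $1$. By Remark \ref{P}, $M(U_\lambda)=M(U)$, while the scaling relations $T(\mathcal{P}(U,\lambda))=\lambda^2 T(U)$ and $J(\mathcal{P}(U,\lambda))=\lambda^{Np}J(U)$, combined with $H(U)=0$ (equivalently $T(U)=\tfrac{Np}{2p+2}J(U)$), give $H(U_\lambda)=\lambda^2 T(U)(1-\lambda^{Np-2})<0$ and $\frac{d^2}{d\lambda^2}S(\mathcal{P}(U,\lambda))|_{\lambda=1}=(2-Np)T(U)<0$ for $p>2/N$. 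Hence $\lambda=1$ is a strict local maximum along this fiber, and $S(U_\lambda)<S(U)$ for $\lambda>1$ sufficiently close to $1$.

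Next I argue by contradiction: assume the orbit $\mathcal{S}:=\{e^{i\theta}U(\cdot+y):\theta\in\real,\,y\in\real^N\}$ is stable. Since $S$ and $\mathcal{H}$ are invariant under phase rotations and translations, the local-minimum hypothesis lets me fix $\epsilon>0$ such that every $W\in\mathcal{H}$ with $\dist_{H^1}(W,\mathcal{S})<\epsilon$ satisfies $S(W)\geq S(U)$. Picking $\lambda_n\downarrow 1$, the assumed stability ensures that the solution $V_n$ with $V_n(0)=U_{\lambda_n}$ stays in $B_{H^1}(\mathcal{S},\epsilon)$ for all $t<T_{\max}$; conservation of mass and energy then give $M(V_n(t))=M(U)$ and $S(V_n(t))=S(U_{\lambda_n})<S(U)$.

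The crucial step is to prove that $H(V_n(t))\leq-c_n<0$ uniformly in $t$. For each $t$, $g(\mu):=S(\mathcal{P}(V_n(t),\mu))$ attains a unique positive maximum at some $\mu^\star=\mu^\star(V_n(t))$ satisfying $\mathcal{P}(V_n(t),\mu^\star)\in\mathcal{H}$. Since $V_n(t)$ lies near $\mathcal{S}$ (on which $\mu^\star\equiv 1$) and $\mu^\star$ depends continuously on its argument, $\mathcal{P}(V_n(t),\mu^\star)$ remains in the $\epsilon$-neighborhood of $\mathcal{S}$, so the local-minimum hypothesis yields $S(\mathcal{P}(V_n(t),\mu^\star))\geq S(U)$. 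A Taylor expansion of $g$ at $\mu^\star$ then gives
\[
S(U)-S(U_{\lambda_n})\leq g(\mu^\star)-g(1)\leq C(1-\mu^\star)^2,
\]
so $\mu^\star$ is bounded away from $1$ uniformly in $t$; via the explicit identity $H(V_n(t))=T(V_n(t))(1-\mu^{\star\,2-Np})$ this translates to the desired uniform lower bound on $|H|$, and continuity of $H(V_n(t))$ in $t$ combined with $H(V_n(0))<0$ fixes the sign.

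Finally, since $U$ decays exponentially (standard for bound-states), each $U_{\lambda_n}$ has finite variance, so \eqref{virial} yields
\[
\frac{d^2}{dt^2}\sum_{i=1}^M\|xv_{n,i}(t)\|_2^2=8H(V_n(t))\leq-8c_n,
\]
forcing the nonnegative variance to become negative in finite time, a contradiction. The main obstacle is the third step: transferring the local minimum on $\mathcal{H}$ (a codimension-one set) to a uniform negativity of $H$ along the off-$\mathcal{H}$ trajectory $V_n(t)$, via careful control of the scaling parameter $\mu^\star$.
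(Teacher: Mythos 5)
Your proposal is correct and follows essentially the same route as the paper: perturb by the scaling $\mathcal{P}(U,\lambda)$ with $\lambda>1$, transfer the local-minimum property of $S$ on $\mathcal{H}$ to the trajectory via the rescaled point $\mathcal{P}(V(t),\lambda^*(V(t)))\in\mathcal{H}$ (which stays near the orbit), deduce a uniform negative bound on $H(V(t))$ by a continuity-of-sign argument, and conclude with the Virial identity against the boundedness of the neighborhood. The only difference is technical: where you bound $|1-\mu^\star|$ from below by a second-order Taylor expansion and convert via $H=T(1-(\mu^\star)^{2-Np})$, the paper gets the cleaner linear estimate $H(W)\le S(W)-S(U)$ directly from the concavity of $\lambda\mapsto S(\mathcal{P}(W,\lambda))$ on $(\lambda^*(W),1)$ (remark \ref{desigualdadegeral}).
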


\begin{nota}
Set $p>2/N$. Assuming that $U\in A$ is a non-degenerate critical point of the action (modulo rotations), the Morse index of $S$ at $U$, $m(U)$, is greater or equal to $1$: a negative direction is given by the path $\lambda\mapsto S(\mathcal{P}(U,\lambda))$. Note that this direction does not belong to the tangent space of $\mathcal{H}$ at $U$. Therefore, the condition in the above theorem is equivalent $m(U)=1$. The problem for $m(U)\ge 2$ is much more difficult, and it is still unanswered for the scalar equation.
\end{nota}

\begin{teo}\label{completeinstablity}
Assume (P1) and $p=2/N$. Then $A$ is unstable.
\end{teo}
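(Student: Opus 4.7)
The plan is to adapt the classical $L^2$-critical instability argument. The key structural observation is that $p=2/N$ yields $\frac{Np}{2p+2}=\frac{1}{p+1}$, so $H=2E$ identically. Together with $H(U)=0$ for $U\in A$, this forces $E\equiv 0$ on $A$. It then suffices, given $U\in A$, to construct $U_n\to U$ in $(H^1(\real^N))^M$ with finite variance $\sum_i\|xu_{n,i}\|_2^2<\infty$ and $E(U_n)<0$: the virial identity \eqref{virial} together with conservation of $E$ gives
\ben
\frac{d^2}{dt^2}\sum_{i=1}^M\|xv_{n,i}(t)\|_2^2=8H(V_n(t))=16E(U_n)<0,
\een
so the variance is bounded above by a concave-down quadratic in $t$ and would become negative in finite time unless the solution ceased to exist first, forcing $T_{\max}(U_n)<\infty$.

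For the perturbation I would use the simple real scaling $U_n=\alpha_n U$ with $\alpha_n\downarrow 1$. Clearly $U_n\to U$ in $(H^1)^M$ and $xU_n=\alpha_n xU$, so the variance question reduces to $xU\in (L^2)^M$. Using $T(U)=\frac{1}{p+1}J(U)$, which is $H(U)=0$ rewritten in the critical case,
\ben
E(U_n)=\frac{\alpha_n^2}{2}T(U)-\frac{\alpha_n^{2p+2}}{2p+2}J(U)=\frac{\alpha_n^2T(U)}{2}\left(1-\alpha_n^{2p}\right),
\een
which is strictly negative whenever $\alpha_n>1$, since $T(U)>0$ for $U\not\equiv 0$.

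What remains is the finite variance of bound-states, and this is the only step requiring care. Elliptic bootstrap applied componentwise to \eqref{BS} shows that $U$ is smooth and tends to zero at infinity; writing
\be
-\Delta u_i+u_i=\Bigl(\sum_{j=1}^M k_{ij}|u_j|^{p+1}|u_i|^{p-1}\Bigr)u_i,
\ee
the parenthesised coefficient is $o(1)$ as $|x|\to\infty$, so a standard subsolution comparison with a Yukawa barrier $e^{-\sigma|x|}$, for any $0<\sigma<1$, yields $|U(x)|\le Ce^{-\sigma|x|}$. This gives $xU\in(L^2)^M$, closes the virial argument, and proves the instability of $A$. The whole scheme is parallel to the scalar critical case in \cite{cazenave}; the exponential-decay estimate is the only place where the system structure requires an entirely routine adaptation, and it is the main, but very mild, obstacle.
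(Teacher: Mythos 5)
Your proposal is correct and follows essentially the same route as the paper: observe that $p=2/N$ gives $H=2E$, perturb the bound-state by a scalar factor $\lambda>1$ (your $\alpha_n$) to make the energy strictly negative, and conclude blow-up from conservation of energy and the Virial identity. The only difference is that you explicitly verify the finite variance of bound-states via exponential decay, a point the paper leaves implicit; this is a reasonable addition but not a change of method.
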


\end{section}

\begin{section}{Stability in the subcritical case}

Throughout this section, we shall assume $p<2/N$.
\begin{lema}
There exists $\mu>0$ such that
$$
M(\mathcal{Q})=\mu,\ \forall \mathcal{Q}\in G.
$$
\end{lema}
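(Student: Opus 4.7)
The plan is to reduce the claim to a direct algebraic identity using three facts already established in the excerpt: (a) every $\mathcal{Q}\in G$ is a bound-state, so the Nehari-type identity $I(\mathcal{Q})=J(\mathcal{Q})$ holds; (b) the Pohozaev identity gives $H(\mathcal{Q})=0$, i.e. $T(\mathcal{Q})=\tfrac{Np}{2p+2}J(\mathcal{Q})$; and (c) by Lemma \ref{existencia}, $J(\mathcal{Q})=\lambda_G$ for every $\mathcal{Q}\in G$.

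From (a), $M(\mathcal{Q})=J(\mathcal{Q})-T(\mathcal{Q})$. Substituting (b) yields
$$
M(\mathcal{Q})=\left(1-\frac{Np}{2p+2}\right)J(\mathcal{Q})=\frac{2p+2-Np}{2p+2}\,J(\mathcal{Q}).
$$
Using (c), the right-hand side equals $\frac{2p+2-Np}{2p+2}\lambda_G$, which is independent of the particular choice of $\mathcal{Q}\in G$. Setting
$$
\mu:=\frac{2p+2-Np}{2p+2}\lambda_G
$$
gives $M(\mathcal{Q})=\mu$ for all $\mathcal{Q}\in G$.

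Finally, I would check $\mu>0$. Since $\lambda_G>0$ by Lemma \ref{existencia}, it suffices to verify $2p+2-Np>0$, which is exactly the subcritical assumption $p<2/N$ in force throughout the section (note $2p+2-Np>2p>0$). There is no real obstacle here: the entire argument is a one-line combination of the Nehari relation, Pohozaev, and the characterization of $G$ as the level set $\{J=\lambda_G\}$ of the minimization problem. The only thing to be careful about is invoking the right statements from the setup, in particular that the minimization in Lemma \ref{existencia} fixes $J(\mathcal{Q})=\lambda_G$ so that the computation indeed produces a universal constant.
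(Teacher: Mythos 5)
Your proof is correct and follows essentially the same route as the paper, which simply cites the identities $I(\mathcal{Q})=J(\mathcal{Q})$ and $H(\mathcal{Q})=0$; you make the computation explicit and pin down the common value of $J$ on $G$ via Lemma \ref{existencia} (one could equally use that all ground-states share the same action, so $S(\mathcal{Q})=\frac{p}{2p+2}J(\mathcal{Q})$ forces $J$ to be constant on $G$). The positivity check $2p+2-Np>0$ under the standing assumption $p<2/N$ is also handled correctly.
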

\begin{proof}
This follows easily from the identities $I(\mathcal{Q})=J(\mathcal{Q})$ and Pohozaev's identity.
\end{proof}

\begin{lema}\label{equivalenciasubcritico}
Assume (P1). Then $G$ is the set of of solutions of the minimization problem
\ben\label{minenerg}
E(U)=\min_{M(W)=\mu} E(W), \quad M(U)=\mu.
\een
Moreover, if $\{W_n\}$ is a minimizing sequence, then $J(W_n)\to J(\mathcal{Q})$, with $\mathcal{Q}\in G$.
\end{lema}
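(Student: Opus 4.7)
The plan is to combine the Pohozaev-type identity $H(\mathcal{Q})=0$ with the sharp Gagliardo--Nirenberg inequality of Lemma \ref{gagliardo} to show directly that, for every competitor $W$ with $M(W)=\mu$, one has $E(W)\ge E(\mathcal{Q})$, with equality iff $W\in G$. The first step is to exploit that for any $\mathcal{Q}\in G$, the identities $I(\mathcal{Q})=J(\mathcal{Q})$, $H(\mathcal{Q})=0$ and $M(\mathcal{Q})=\mu$ fix $T(\mathcal{Q})$ and $J(\mathcal{Q})$ as positive constants depending only on $\mu$, and yield
\begin{equation*}
T(\mathcal{Q})=\frac{Np}{2p+2}J(\mathcal{Q}),\qquad E(\mathcal{Q})=\left(\frac{1}{2}-\frac{1}{Np}\right)T(\mathcal{Q})<0,
\end{equation*}
the last inequality using $p<2/N$.

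Next, I would apply Lemma \ref{gagliardo} to $W$ with $M(W)=\mu=M(\mathcal{Q})$. Substituting the explicit form of $C_M$ yields
\begin{equation*}
J(W)\le \frac{J(\mathcal{Q})}{T(\mathcal{Q})^{Np/2}}\,T(W)^{Np/2},
\end{equation*}
and setting $\tau=T(W)/T(\mathcal{Q})\ge 0$ this becomes
\begin{equation*}
E(W)\ge T(\mathcal{Q})\,g(\tau),\qquad g(\tau):=\frac{\tau}{2}-\frac{\tau^{Np/2}}{Np}.
\end{equation*}
Since $Np/2<1$, one-variable calculus shows that $g$ has a unique global minimum on $[0,\infty)$ at $\tau=1$, with $g(1)=\frac{1}{2}-\frac{1}{Np}$. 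Hence $E(W)\ge E(\mathcal{Q})$.

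The equality case $E(W)=E(\mathcal{Q})$ then forces both $\tau=1$ (uniqueness of the minimum of $g$) and equality in Gagliardo--Nirenberg; the former gives $T(W)=T(\mathcal{Q})$, and together they force $J(W)=J(\mathcal{Q})$. Plugging into the equality case of Lemma \ref{gagliardo}, a direct computation of the rescaling parameters gives $\nu=\zeta=1$, so $W\in G$ itself. Conversely, any $\mathcal{Q}\in G$ is feasible and attains the bound, so $G$ coincides with the minimizers. For the \emph{moreover} part, if $\{W_n\}$ is a minimizing sequence, then $g(\tau_n)\to g(1)$; since $g$ is continuous and has $\tau=1$ as its unique global minimum, $\tau_n\to 1$, hence $T(W_n)\to T(\mathcal{Q})$, and the identity $E=\frac{1}{2}T-\frac{1}{2p+2}J$ then forces $J(W_n)\to J(\mathcal{Q})$.

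The main, mild obstacle is the bookkeeping of the equality case: one must check that equality in the composite estimate simultaneously saturates both $g(\tau)\ge g(1)$ and Gagliardo--Nirenberg, and verify that matching $M(W)$, $T(W)$ and $J(W)$ with $M(\mathcal{Q})$, $T(\mathcal{Q})$ and $J(\mathcal{Q})$ makes the Lemma \ref{gagliardo} rescaling trivial. Everything else is one-variable calculus on $g$.
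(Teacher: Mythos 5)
Your proof is correct, and it rests on the same two pillars as the paper's: the sharp vector-valued Gagliardo--Nirenberg inequality of lemma \ref{gagliardo} with the ground-state constant, and the Pohozaev relations that fix $T(\mathcal{Q})$ and $J(\mathcal{Q})$ in terms of $\mu$. The organization differs slightly. The paper first replaces $W$ by its optimal rescaling $Z=\mathcal{P}(W,\lambda_0)$ on the manifold $\{H=0\}$ (so that $E(W)\ge E(Z)$ and $E(Z)$ is a negative multiple of $T(Z)$), and only then applies Gagliardo--Nirenberg to $Z$ to conclude $T(Z)\le T(\mathcal{Q})$; you instead apply Gagliardo--Nirenberg directly to $W$ and minimize the resulting one-variable function $g(\tau)$ of $\tau=T(W)/T(\mathcal{Q})$. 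These are two ways of arranging the same estimate, but your version pays off in the ``moreover'' clause: the paper's argument there leans on the assertion $\|W_n-Z_n\|_{(H^1(\real^N))^M}\to 0$, which implicitly requires the optimal scaling parameters $\lambda_0^n$ to converge to $1$ and is left unjustified, whereas your squeeze $g(1)\le g(\tau_n)\le E(W_n)/T(\mathcal{Q})\to g(1)$ yields $\tau_n\to 1$, hence $T(W_n)\to T(\mathcal{Q})$ and then $J(W_n)\to J(\mathcal{Q})$, with no detour. The equality-case bookkeeping you flag as the main obstacle is handled exactly as in the paper: $\tau=1$ and saturation of Gagliardo--Nirenberg give $T(W)=T(\mathcal{Q})$ and $J(W)=J(\mathcal{Q})$, which together with $M(W)=\mu$ force $\nu=\zeta=1$ in \eqref{defmu}--\eqref{defzeta}, so $W$ itself lies in $G$.
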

\begin{proof}
Let $W$ be such that $M(W)=\mu$. Consider the function (see remark \ref{P})
\ben
\lambda\mapsto f(\lambda)=E(\mathcal{P}(W,\lambda)),\ \lambda>0
\een 
Since $p<2/N$, $f$ has a unique minimum $\lambda_0$. Let $Z=f(\lambda_0)$. Then $f'(\lambda_0)=0$, which implies that $H(Z)=0$, i.e.,
\ben
T(Z)=\frac{Np}{2p+2}J(Z).
\een
Therefore,
\ben
E(Z)=\frac{Np-2}{2Np}T(Z).
\een
Using the vector-valued Gagliardo-Nirenberg inequality,
\ben\label{gagli}
\frac{2p+2}{Np}T(Z)=J(Z)\le C_MM(Z)^{\frac{2-(N-2)p}{2}}T(Z)^{\frac{Np}{2}},
\een
and so, from $M(Z)=\mu$,
\ben
\frac{2p+2}{Np}T(Z)^{\frac{2-Np}{2}}\le C_M\nu^{\frac{2-(N-2)p}{2}}.
\een
Let $\mathcal{Q}\in G$. By lemma \ref{gagliardo}, we obtain $T(Z)\le T(\mathcal{Q})$. Therefore
\ben
E(W)\ge E(Z) = \frac{Np-2}{2Np}T(Z) \ge \frac{Np-2}{2Np}T(\mathcal{Q}) =E(\mathcal{Q})
\een
and so $\mathcal{Q}$ is a solution of \eqref{minenerg}. If $W$ is also a solution of \eqref{minenerg}, then one must have equality in \eqref{gagli}. Again by lemma \ref{gagliardo}, 
\ben
\nu W(\zeta x) \in G,
\een
with $\nu, \zeta$ given by \eqref{defmu}, \eqref{defzeta}. Since $M(W)=M(\mathcal{Q})$ and
$J(W)=J(\mathcal{Q})$, $\nu=\zeta=1$. Therefore $W\in G$.

If $\{W_n\}_{n\in\nat}$ is a minimizing sequence, define $\{Z_n\}_{n\in\nat}$ as above. Then $\{Z_n\}$ is also a minimizing sequence and
 $$\|W_n-Z_n\|_{(H^1(\real^N))^M}\to 0,\ n\to\infty.$$Hence
\ben
\frac{Np-2}{2(2p+2)}J(\mathcal{Q})=E(\mathcal{Q})=\lim E(Z_n) = \lim \frac{Np-2}{2(2p+2)}J(Z_n)=\lim \frac{Np-2}{2(2p+2)}J(W_n),
\een
as we wanted.
\end{proof}

\textbf{\textit{Proof of theorem \ref{stability}}:}
We start with the stability for $X=\{1,...,M\}$ (that is, for $G$). 
By contradiction suppose that there exists a sequence $\{V^0_n\}_{n\in\nat}\subset (H^1(\real^N))^M$ such that, for some $\mathcal{Q}_0\in G_1$,
\ben
\|V^0_n-\mathcal{Q}_0\|_{(H^1(\real^N))^M}\to 0,\ n\to\infty
\een
and, letting $V_n$ be the solution of (M-NLS) with initial data $V^0_n$, there exist $\{t_n\}_{n\in\nat}$ and $\epsilon>0$ such that
\ben\label{afastado}
\inf_{\mathcal{Q}\in G_1} \|V_n(t_n)-\mathcal{Q}\|_{(H^1(\real^N))^M} = \epsilon.
\een
By continuity and conservation of mass and energy,
\ben
E(V_n(t_n))=E(V_n^0)\to E(\mathcal{Q}_0),\ M(V_n(t_n))=M(V_n^0)\to M(\mathcal{Q}_0)=\mu.
\een
Therefore, the sequence 
\ben
W_n=\left(\frac{\mu}{M(V_n^0)}\right)^{\frac{1}{2}}V_n(t_n)
\een
is a minimizing sequence of \eqref{minenerg}. By lemma \ref{equivalenciasubcritico}, $J(W_n)\to J(\mathcal{Q}_0)$. From lemma \ref{existencia}, $W_n\to\mathcal{Q}_1$, with $\mathcal{Q}_1\in G$, which implies that $V_n(t_n)\to \mathcal{Q}_1$. Taking $\epsilon<\delta$, one obtains $d(\mathcal{Q}_1,G_1)<\delta$, which means that $\mathcal{Q}_1\in G_1$, which is absurd.

In the general case, given $X\subset\{1,...,M\}$, one may proceed exactly as above: for $i\notin X$, since the mass of each component is conserved, the $i$-th components must converge to $0$ in $L^2$ and, by interpolation, to $0$ in $L^{2p+2}$. This means that the remaining components are a minimizing sequence of \eqref{minenerg}, for the (L-NLS) system formed by the components in $X$, and therefore must converge to a ground-state of such a system.
\vskip10pt

\textit{\textbf{Proof of proposition \ref{stabilityB}}}:

As in the previous proof, we start with $X=\{1,...,M\}$. The general case $X\subset\{1,...,M\}$ is treated as in the previous proof.

Define, for $u\in H^1(\real^N)$,
\ben
E_1(u)=\frac{1}{2}\|\nabla u\|_2^2 - \frac{\beta}{2p+2}\|u\|_{2p+2}^{2p+2}.
\een

By lemma \ref{equivalenciasubcritico} for $M=1$, the set of solutions of the minimization problem
\ben
E_1(u)=\min_{\|w\|_2^2=\|\beta^{\frac{1}{2p}}Q\|_2^2} E_1(w),\quad \|u\|_2^2=\|\beta^{\frac{1}{2p}}Q\|_2^2
\een
is $\{e^{i\theta}Q(\cdot+y): \theta\in\real, y\in\real^N\}$. Let $U=(u_1,...,u_M)\in (H^1(\real^N))^M$ be such that $ \|u_i\|_2^2=\|\beta^{\frac{1}{2p}}Q\|_2^2$. Then
\ben
\sum_{i=1}^M E_1(u_i) \ge \sum_{i=1}^M E_1(Q).
\een
Let $\mathcal{Q}$ be the vector formed by $M$ copies of $Q$. Now, from Young's inequality, we have
\ben
E(U)\ge \sum_{i=1}^M E_1(u_i)  \ge \sum_{i=1}^M E_1(Q) = E(\mathcal{Q}).
\een
Therefore $\mathcal{Q}$ is a solution of \eqref{minenergiamassadecadaumafixa}. If $U$ is also a solution, one must have equality in the above relation, which implies that
\ben
u_i=e^{i\theta_i}Q(\cdot+y_i), \ \theta_i\in\real, \ y_i\in\real^N.
\een

If there exist $i_0, j_0$ such that $y_{i_0}\neq y_{j_0}$, one easily sees that there exists $D\subset \real^N$ of positive measure such that, for all $x\in D$, $Q(x+y_{i_0})\neq Q(x+y_{j_0})$ and so, using Young's inequality,
\ben
Q(x+y_{i_0})^{p+1}Q(x+y_{j_0})^{p+1}< \frac{1}{2}Q(x+y_{i_0})^{2p+2} + \frac{1}{2}Q(x+y_{j_0})^{2p+2},\quad x\in D.
\een
On the other hand, we have in general
\ben
Q(x+y_{i})^{p+1}Q(x+y_{j})^{p+1}\le \frac{1}{2}Q(x+y_{i})^{2p+2} + \frac{1}{2}Q(x+y_{j})^{2p+2},\quad x\in \real^N,\  1\le i,j\le M.
\een
Consequently,
\begin{align*}
\int (a_{i}Q(\cdot+y_{i}))^{p+1}(a_jQ(\cdot+y_j))^{p+1} \le a_i^{p+1}a_j^{p+1}\left(\frac{1}{2}\int Q(\cdot+y_i)^{2p+2} + \frac{1}{2}\int Q(\cdot+y_j)^{2p+2}\right)\\ = a_i^{p+1}a_j^{p+1}\int Q^{2p+2} = \int (a_iQ)^{p+1}(a_jQ)^{p+1},
\end{align*}
with strict inequality if $i=i_0$ and $j=j_0$ and so
\ben
E(U)=\frac{1}{2}T(U)-\frac{1}{2p+2}J(U) > \frac{1}{2}T(\mathcal{Q})-\frac{1}{2p+2}J(\mathcal{Q}) = E(\mathcal{Q}),
\een
which is absurd. Hence 
\ben
B^c=\{(e^{i\theta_i}\beta^{\frac{1}{2p}}Q(\cdot+y))_{1\le i\le M}: \theta_i\in\real, y\in\real^N\}.
\een

Now we prove the stability property. By contradiction, suppose that there exists a sequence $\{V^0_n\}_{n\in\nat}\subset (H^1(\real^N))^M$ such that, for some $\mathcal{Q}_0\in G$,
\ben
\|V^0_n-\mathcal{Q}_0\|_{(H^1(\real^N))^M}\to 0,\ n\to\infty
\een
and, letting $V_n$ be the solution of (M-NLS) with initial data $V^0_n$, there exist $\{t_n\}_{n\in\nat}$ and $\epsilon>0$ such that
\ben\label{afastado2}
\inf_{\mathcal{Q}\in B^c} \|V_n(t_n)-\mathcal{Q}\|_{(H^1(\real^N))^M} > \epsilon.
\een
By continuity and from the conservation of the $L^2$ norm of each component and of the energy,
\ben
E(V_n(t_n))=E(V_n^0)\to E(\mathcal{Q}_0),\ \|(V_n(t_n))_i\|_2^2=\|(V_n^0)_i\|_2^2\to \|\beta^{\frac{1}{2p}}Q\|_2^2.
\een
Therefore, the sequence $W_n=(w_n^1,...,w_n^M)$ defined by
\ben
w_n^i=\left(\frac{\|\beta^{\frac{1}{2p}}Q\|_2^2}{\|(V_n^0)_i\|_2^2}\right)^{\frac{1}{2}}V_n(t_n),\ i=1,...,M
\een
is a minimizing sequence of \eqref{minenergiamassadecadaumafixa}. Now notice that this implies that
\ben
E_1(w_n^i)\to E_1(Q), \ i=1,...,M.
\een
From the stability results for (NLS) (see \cite{cazenave}, chapter 8), this implies that, for some $\theta_i\in\real$ and $y_i\in\real^N$, $w_n^i\to e^{i\theta_i}Q(\cdot+y_i)$. Applying a reasoning as before, we see that $y_i=y$, for all $i$. Therefore $W_n\to \mathcal{Q}_1$, with $\mathcal{Q}_1\in B^c$ and so $V_n\to \mathcal{Q}_1$, which is absurd, by \eqref{afastado2}.

\end{section}
\begin{section}{Instability in the supercritical case}
In this section, we study the case $p>2/N$. We define, for $W\neq 0$, $\lambda^*(W)$ to be the maximum of the function $g(\lambda)=S(\mathcal{P}(W,\lambda))$.

\begin{lema}
Assume (P1). Then $G$ is the set of of solutions of the minimization problem
\ben\label{minh}
S(U)=\min_{H(W)=0} S(W), \quad H(U)=0.
\een
\end{lema}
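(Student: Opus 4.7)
The plan is to verify (a) that $G\subseteq\{H=0\}$ and $S$ takes a common value on $G$, and (b) that this value is a strict lower bound for $S$ on $\{H=0\}\setminus G$.

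For (a), Pohozaev's identity gives $H(\mathcal{Q})=0$ for every bound-state, while Lemma~\ref{existencia} combined with the bound-state identity $I(\mathcal{Q})=J(\mathcal{Q})$ forces $I(\mathcal{Q})=J(\mathcal{Q})=\lambda_G$ for every $\mathcal{Q}\in G$, so
$$
S(\mathcal{Q})=\tfrac{1}{2}\lambda_G-\tfrac{1}{2p+2}\lambda_G=\tfrac{p}{2(p+1)}\lambda_G
$$
is constant on $G$.

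For (b), I would fix $W\ne 0$ with $H(W)=0$. The constraint forces $T(W)=\frac{Np}{2(p+1)}J(W)$, and in particular $J(W)>0$. The key step is to apply the mass-preserving rescaling of Remark~\ref{P}: pick $\lambda_W>0$ so that $J(\mathcal{P}(W,\lambda_W))=\lambda_W^{Np}J(W)=\lambda_G$. Since $\mathcal{P}$ preserves mass, Lemma~\ref{existencia} then gives
$$
M(W)+\lambda_W^2 T(W)=I(\mathcal{P}(W,\lambda_W))\ge\lambda_G,
$$
with equality if and only if $\mathcal{P}(W,\lambda_W)\in G$. After substituting $T(W)=\frac{Np}{2(p+1)}J(W)$ and $\lambda_W^2=(\lambda_G/J(W))^{2/(Np)}$, this becomes an explicit lower bound for $M(W)$ depending only on $J(W)$.

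Plugging this bound into the identity $S(W)=\tfrac{1}{2}M(W)+\tfrac{Np-2}{4(p+1)}J(W)$ (valid on $\{H=0\}$) produces $S(W)\ge f(J(W))$ for a concrete function $f:(0,\infty)\to\real$. A direct calculus computation shows that $f'$ vanishes only at $J(W)=\lambda_G$, and the supercriticality $p>2/N$ (equivalently $Np>2$) yields $f''>0$ there; hence $f$ attains its global minimum at $\lambda_G$, with value $\tfrac{p}{2(p+1)}\lambda_G=S(\mathcal{Q})$. This gives $S(W)\ge S(\mathcal{Q})$, and equality throughout requires both $J(W)=\lambda_G$ (forcing $\lambda_W=1$) and equality in Lemma~\ref{existencia} at $W$ itself, so $W\in G$.

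The only subtlety is the choice of scaling: a direct homogeneous rescaling $W\mapsto cW$ only preserves $\{H=0\}$ for $c=1$, whereas $\mathcal{P}(W,\lambda)$ preserves mass and intertwines transparently with the normalization $J=\lambda_G$ of Lemma~\ref{existencia}. The supercriticality hypothesis enters precisely in ensuring $f''>0$, which is what makes the infimum of the one-variable auxiliary function coincide with $S(\mathcal{Q})$.
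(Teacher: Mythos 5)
Your proof is correct, and while it runs on the same two engines as the paper's --- the mass-preserving rescaling $\mathcal{P}(W,\lambda)$ and the variational characterization of $G$ in Lemma \ref{existencia} (which, together with $I(\mathcal{Q})=J(\mathcal{Q})=\lambda_G$, gives $I\ge\lambda_G$ on $\{J=\lambda_G\}$ with equality exactly on $G$) --- it packages them differently. For the lower bound, the paper argues qualitatively: since $p>2/N$, the map $\lambda\mapsto S(\mathcal{P}(W,\lambda))$ has its unique maximum at $\lambda=1$ when $H(W)=0$, so $S(W)\ge S(\mathcal{P}(W,\lambda_0))\ge S(\mathcal{Q})$ after normalizing $J$ to $\lambda_G$; you instead eliminate $T$ via the constraint and reduce everything to the strict convexity of an explicit one-variable function $f$ of $J(W)$, minimized at $\lambda_G$. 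These are two renderings of the same supercriticality mechanism. The genuine divergence is in the converse inclusion: the paper shows a minimizer $U$ is a bound-state via a Lagrange multiplier argument, using the second scaling $U_\sigma=\sigma^{1/p}U(\sigma\cdot)$ that preserves $\{H=0\}$ to force the multiplier to vanish, and only then concludes $U\in G$; you get $W\in G$ directly from the equality analysis --- strict convexity of $f$ forces $J(W)=\lambda_G$ hence $\lambda_W=1$, and saturation of $I(\mathcal{P}(W,\lambda_W))\ge\lambda_G$ places $W$ in $G$ by the rigidity in Lemma \ref{existencia}. This is arguably cleaner, as it never invokes the Euler--Lagrange equation on the constraint set (and so sidesteps checking $H'(U)\neq0$). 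One shared convention worth making explicit, as you do and the paper does not: $W=0$ satisfies $H(W)=0$ with $S(0)=0<S(\mathcal{Q})$, so the minimization in \eqref{minh} must be understood over nonzero $W$.
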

\begin{proof}
Let $W$ be such that $H(W)=0$ and consider the function (see remark \ref{P})
\ben
\lambda\mapsto g(\lambda)=S(\mathcal{P}(W,\lambda)),\ \lambda>0
\een 
Since $p>2/N$, this function has a unique maximum and, by \eqref{derivadaaccao}, it must be $\lambda=1$. Therefore
\ben
S(\mathcal{P}(\lambda,W))\le S(W), \forall \lambda>0.
\een
On the other hand, there exists $\lambda_0>0$ such that $J(\mathcal{P}(\lambda_0,U))=\lambda_G$. Hence, for $\mathcal{Q}\in G$,
$$
S(\mathcal{Q})\le S(\mathcal{P}(\lambda_0,W))\le S(W), \forall W:\ H(W)=0.
$$
Therefore $G$ is a subset of the set of solutions of \eqref{minh} and the latter is nonempty.

Now consider $U$ solution of \eqref{minh}. Define, for $\sigma>0$, $U_\sigma(x)=\sigma^{\frac{1}{p}}U(\sigma x)$. By a change of variables,
\ben
H(U_\sigma)=\sigma^{2-N+\frac{2}{p}}H(U)=0.
\een
Since $U$ is a minimizer, one must have
\ben
\frac{d}{d\sigma} S(U_\sigma)\Big|_{\sigma=1}=0,\mbox{ i.e. }  \langle S'(U), U \rangle_{H^{-1}\times H^1} =0
\een
On the other hand, there exists $\eta$ such that $S'(U)=\eta H'(U)$. Applying to $U$ and using $H(U)=0$,
\ben
0 = \langle S'(U), U \rangle_{H^{-1}\times H^1} =\eta\langle H'(U), U \rangle_{H^{-1}\times H^1}  = -2p\eta T(U).
\een
Therefore $\eta=0$ and so $U\in A$. Given $\mathcal{Q}\in G$, $H(\mathcal{Q})=0$, and so $S(U)\le S(\mathcal{Q})$, which means that $U\in G$.
\end{proof}
\begin{lema}
Let $\mathcal{Q}\in G$ and $W\in (H^1(\real^N))^M$ such that $H(W)<0$. Then
\ben
H(W)\le S(W) - S(\mathcal{Q}).
\een
\end{lema}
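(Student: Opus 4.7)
The plan is to combine the variational characterization from the preceding lemma (where $G$ is the set of minimizers of $S$ on $\mathcal{H}=\{H=0\}$) with the dilation $\mathcal{P}(W,\lambda)$ used throughout the section. First I would record the scaling identities $M(\mathcal{P}(W,\lambda))=M(W)$, $T(\mathcal{P}(W,\lambda))=\lambda^2 T(W)$ and $J(\mathcal{P}(W,\lambda))=\lambda^{Np}J(W)$, which yield explicit polynomial expressions for $S(\mathcal{P}(W,\lambda))$ and $H(\mathcal{P}(W,\lambda))$ as functions of $\lambda$.

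The key algebraic observation is that the linear combination
$$\varphi(\lambda):=S(\mathcal{P}(W,\lambda))-\frac{1}{Np}H(\mathcal{P}(W,\lambda))=\frac{1}{2}M(W)+\frac{Np-2}{2Np}\lambda^2 T(W)$$
has no $\lambda^{Np}$ term, hence is strictly increasing in $\lambda$ (since $Np>2$ by hypothesis and $T(W)>0$).

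Since $H(W)<0$ while $H(\mathcal{P}(W,\lambda))=\lambda^2\bigl(T(W)-\tfrac{Np}{2p+2}\lambda^{Np-2}J(W)\bigr)$ is positive for small $\lambda$ (using $Np-2>0$), the intermediate value theorem furnishes some $\lambda^{*}\in(0,1)$ with $H(\mathcal{P}(W,\lambda^{*}))=0$. The previous lemma gives $S(\mathcal{Q})\le S(\mathcal{P}(W,\lambda^{*}))=\varphi(\lambda^{*})$, and the monotonicity of $\varphi$ together with $\lambda^{*}<1$ yields
$$S(\mathcal{Q})\le \varphi(\lambda^{*})\le \varphi(1)=S(W)-\frac{1}{Np}H(W),$$
i.e.\ $(Np)^{-1}H(W)\le S(W)-S(\mathcal{Q})$. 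Since $H(W)<0$ and $Np>1$, one has $H(W)\le (Np)^{-1}H(W)$, and the claimed inequality $H(W)\le S(W)-S(\mathcal{Q})$ follows immediately.

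The only genuine obstacle is spotting the right combination $S-(Np)^{-1}H$ whose $J$-contribution cancels, leaving a manifestly increasing function of $\lambda$; once that algebraic cancellation is in hand, the rest reduces to a single intermediate-value argument together with the minimization property established in the previous lemma.
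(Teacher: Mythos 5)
Your proof is correct, but it takes a genuinely different route from the paper's. The paper works with $g(\lambda)=S(\mathcal{P}(W,\lambda))$ directly: since $H(W)<0$, the maximum $\lambda_0$ of $g$ lies in $(0,1)$, $g$ is concave on $(\lambda_0,1)$, and the tangent-line inequality at $\lambda=1$ (whose slope is $H(W)$ by \eqref{derivadaaccao}) gives $S(W)\ge S(\mathcal{P}(W,\lambda_0))+(1-\lambda_0)H(W)\ge S(\mathcal{Q})+H(W)$, using $H(\mathcal{P}(W,\lambda_0))=0$ and the minimization property of $G$ on $\mathcal{H}$. You instead exploit the algebraic cancellation in $\varphi(\lambda)=S(\mathcal{P}(W,\lambda))-\tfrac{1}{Np}H(\mathcal{P}(W,\lambda))=\tfrac{1}{2}M(W)+\tfrac{Np-2}{2Np}\lambda^2T(W)$, which is monotone in $\lambda$, locate the zero $\lambda^*\in(0,1)$ of $H$ along the dilation path by the intermediate value theorem, and then invoke the same minimization lemma at $\lambda^*$. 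Both arguments hinge on the same two ingredients (the zero of $H$ on the dilation path and the variational characterization of $G$), but yours replaces the convexity/tangent-line step by an explicit monotone combination, and in fact yields the sharper constant $\tfrac{1}{Np}H(W)\le S(W)-S(\mathcal{Q})$, from which the stated inequality follows since $H(W)<0$ and $Np>2$. Two small points worth making explicit: $T(W)>0$ and $J(W)>0$ both follow from $H(W)<0$ (otherwise $W=0$ or $H(W)\ge 0$), and these are what guarantee both the strict monotonicity of $\varphi$ and the positivity of $H(\mathcal{P}(W,\lambda))$ for small $\lambda$. Your argument adapts equally well to the generalization in remark \ref{desigualdadegeral}, so nothing is lost relative to the paper's approach.
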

\begin{proof}
Once again, consider the function
\ben
\lambda\mapsto g(\lambda)=S(\mathcal{P}(W,\lambda)),\ \lambda>0.
\een 
This function has a maximum $\lambda_0<1$ (since $H(W)<0$) and is concave in $(\lambda_0,1)$. Therefore, by remark \ref{derivadaaccao},
\ben
S(W)  \ge  S(\mathcal{P}(W,\lambda_0)) + (1-\lambda_0)H(W) \ge S(\mathcal{P}(W,\lambda_0)) + H(W) \ge S(\mathcal{Q}) + H(W), 
\een
since $H(\mathcal{P}(W,\lambda_0))=0$ and $\mathcal{Q}$ is a solution of \eqref{minh}.
\end{proof}

\begin{nota}\label{desigualdadegeral}
More generally, given any $U\in (H^1(\real^N))^M$, one may prove as above that, if $W$ is such that $H(W)<0$ and $S(\mathcal{P}(W,\lambda^*(W)))\ge S(U)$,
\ben
H(W)\le S(W) - S(U).
\een
\end{nota}

\textbf{\textit{Proof of theorem \ref{stronginstability}}:}

Firstly, we prove that $G$ is unstable.
Consider a ground-state $\mathcal{Q}$. Then, for any $\lambda>1$, $\mathcal{Q}_\lambda:=\mathcal{P}(\mathcal{Q},\lambda)$ satisfies
\ben
H(\mathcal{Q}_\lambda)<0.
\een

Let $V_\lambda$ be the solution of (M-NLS) with initial data $\mathcal{Q}_\lambda$. For $t$ small, $H(V_\lambda(t))<0$. From the conservation of mass and energy,
\ben
S(V_\lambda(t))=S(\mathcal{Q}_\lambda).
\een

By the previous lemma, for any $t$ such that $H(V_\lambda(t))<0$, one has
\ben
H(V_\lambda(t))\le S(V_\lambda(t)) - S(\mathcal{Q}) \le S(\mathcal{Q}_\lambda) - S(\mathcal{Q}) =-\delta<0.
\een
Therefore, by continuity, one must have $H(V_\lambda(t))\le-\delta, \forall t<T_{max}(\mathcal{Q}_\lambda)$. Now, using \eqref{virial},
\ben\label{explosãovirial}
\frac{d^2}{dt^2}\sum_{i=1}^M \|x(v_\lambda)_i(t)\|_2^2 = 8H(V(t))<-8\delta.
\een
Since the variance is positive, one must have $T_{max}(\mathcal{Q}_\lambda)<\infty$ and so $G$ is unstable.
\vskip10pt
If $U=(u_1,...,u_M)\in R$, then there exist $a_i\ge 0$, $\theta_i\in\real$ and $y\in\real^N$ such that $u_i=a_ie^{i\theta_i}Q(\cdot + y)$. Since $Q(\cdot + y)$ is a ground-state for (1-NLS), there exists a sequence $\{v_n^0\}_{n\in\nat}$ such that $v_n^0\to Q(\cdot+y)$ in $H^1(\real^N)$ and $T_{max}(v_n^0)<\infty, \ \forall n$. Let $v_n$ be the solution of (1-NLS) with initial data $v_n^0$. Then one can observe that $V_n=(a_ie^{i\theta_i}v_n)_{1\le i\le M}$ is a solution of (M-NLS), with initial data $V_n^0=(a_ie^{i\theta_i}v_n^0)_{1\le i\le M}$. Since $V_n^0\to U$ in $(H^1(\real^N))^M$, one concludes that $R$ is unstable.
 $\qedsymbol$
\vskip10pt

\textbf{\textit{Proof of theorem \ref{weakinstability}}:}

Let $U\in A$ be a local minimum of $S$ restricted to $\mathcal{H}$. Let $B_\delta(U)$ be a ball with center at $U$ and radius $\delta$ fixed such that
\ben
S(U)\le S(W),\ \forall W\in B_\delta(U)\cap \mathcal{H}.
\een
For $\epsilon>0$ small, one has
\ben
\mathcal{P}(W,\lambda^*(W))\in B_\delta(U),\ \forall W\in B_\epsilon(U).
\een
From remark \ref{desigualdadegeral}, if $W\in B_\epsilon(U)$ is such that $H(W)<0$,
\ben
H(W)\le S(W) - S(U).
\een
Notice that, from the invariance of $S$ and $H$ regarding rotations and translations, the same remains valid for
\ben
W\in \Sigma:= \{e^{i\theta}Z(\cdot + y): \theta\in\real, \ y\in\real^N,\ Z\in B_\epsilon(U)\}.
\een
Consider $U_\lambda=\mathcal{P}(U,\lambda),\ \lambda>1$. Then $H(U_\lambda)<0$. Let $V_\lambda$ be the solution of (M-NLS) with initial data $U_\lambda$. If $V_\lambda(t)\in \Sigma,\ \forall t<T_{max}(U_\lambda)$, then, arguing as in the previous proof,
\ben
H(V_\lambda(t))\le S(U_\lambda)-S(U)=-\delta<0,\ \forall t<T_{max}(U_\lambda).
\een
Then \eqref{explosãovirial} is valid, which leads to $T_{max}(U_\lambda)<\infty$. Since $\Sigma$ is bounded, we arrive at a contradiction. $\qedsymbol$
\end{section}
\begin{section}{Instability in the critical case}
\textit{Proof of theorem \ref{completeinstablity}}:
First, notice that $2E(W)=H(W)$, for any $W\in (H^1(\real^N))^M$.
Let $U\in A$. Then $2E(U)=H(U)=0$. For any $\lambda>1$, $H(\lambda U)<0$. Since $2E=H$, the conservation of energy implies that, setting $V_\lambda$ to be the solution of (M-NLS) with initial data $\lambda U$, $H(V_\lambda(t))=H(\lambda U)$, $t<T_{max}(\lambda U)$. One now concludes as in the supercritical case, using the Virial identity. $\qedsymbol$
\end{section}

\end{document}